\begin{document}

\newcommand{\BB}{{\mathbb B}}
\newcommand{\CC}{{\mathbb C}}
\newcommand{\NN}{{\mathbb N}}
\newcommand{\ZZ}{{\mathbb Z}}

\newcommand{\cC}{{\mathcal C}}
\newcommand{\cF}{{\mathcal F}}
\newcommand{\cO}{{\mathcal O}}

\newcommand{\Gm}{{{\mathbb G}_m}}

\newtheorem{lemma}{Lemma}[section]
\newtheorem{theorem}[lemma]{Theorem}
\newtheorem{prop}[lemma]{Proposition}

\theoremstyle{definition}
\newtheorem{Def}[lemma]{Definition}

\theoremstyle{remark}
\newtheorem{Remark}[lemma]{Remark}

\title{Analytic relations on a dynamical orbit}
\author{Thomas Scanlon}
\thanks{Partially supported by a Templeton Infinity grant and NSF CAREER grant DMS-0450010}
\address{University of California, Berkeley \\
Department of Mathematics \\
Evans Hall\\
Berkeley, CA 94720-3840 \\
USA}
\email{scanlon@math.berkeley.edu}
\maketitle
\begin{abstract}
Let $(K,\vert\cdot\vert)$ be a complete discretely valued field and $f:\BB_1(K,1) \to \BB(K,1)$ a nonconstant analytic map from the unit back to itself.  We assume that $0$ is an attracting fixed point of $f$.  Let $a \in K$ with $\lim_{n \to \infty} f^n(a) = 0$ and consider the orbit $\cO_f(a) := \{ f^n(a) : n \in \NN \}$.  We show that if $0$ is a \emph{superattracting} fixed point, then every irreducible analytic subvariety of $\BB_n(K,1)$ meeting $\cO_f(a)^n$ in an analytically Zariski dense set is defined by equations of the form $x_i = b$ and $x_j = f^\ell(x_k)$.  When $0$ is an attracting, non-superattracting point, we show that all analytic relations come from algebraic tori.
\end{abstract}

\section{Introduction}
In this paper, we study a local analytic version of the so-called dynamical Mordell-Lang problem (see, for instance,~\cite{GTZ} or~\cite{Zh} in which the conjecture is explicitly raised).  Here, one considers an algebraic variety $X$ defined over some field $K$, a regular self-map $f:X \to X$, and a point $a \in X(K)$ and then studies possible algebraic relations on the forward orbit of $a$ under $f$, $\cO_f(a) := \{ f^n(a) : n \in \NN \}$.  That is, one seeks to describe the intersections $Y(K) \cap \cO_f(a)^m$ where $Y \subseteq X^m$ is an algebraic subvariety of some Cartesian power of $X$.  In the classical situation, $X$ is itself a semi-abelian variety and $f$ is an endomorphism of $X$.  The forward orbit $\cO_f(a)$ is then a subset of the subgroup generated by $a$ and it is known that the only irreducible algebraic varieties which can meet a finitely generated subgroup of a semi-abelian variety in a Zariski dense set are themselves translates of sub-semi-abelian varieties.  For the more general dynamical Mordell-Lang problem, one asks merely that the irreducible algebraic varieties meeting $\cO_f(a)^m$ in a Zariski dense set simply be $f$-periodic.  We believe that this conclusion begs the question as to the form of the $f$-periodic varieties and in this paper we take the further step of explicitly describing these exceptional varieties.

While the dynamical Mordell-Lang problem concerns algebraic varieties, the approaches to its solution to date are based on local arguments in which the algebraic dynamical system is regarded as a $p$-analytic dynamical system.  Here we drop the hypothesis that the analytic dynamical system considered comes from a rational function.  Specifically, after a change of variables, we consider convergent analytic functions $f(x) = \sum_{n=1}^\infty b_i x^i$ over some complete DVR and then study the possible analytic relations on the forward orbits $\cO_f(a)$ where $a$ is close enough to zero.  In the case that $0$ is an attracting, but not super-attracting point (by which we mean that $0 < |b_1| < 1$), then all such analytic relations come from suitably deformed algebraic tori.  When $0$ is a super-attracting point ($b_1 = 0$), then the analytic relations are all defined by equations of the form $f(x_i) = x_j$ and $f^k(a) = x_\ell$.  

The key to our proof is a transformation of analytic relations on $\cO_f(a)^n$ into linear relations on certain subsets of the value group.  We then use a theorem of van den Dries and G\"{u}nayd{\i}n~\cite{vdDG} to give a refined description of the solutions to these equations.

When the analytic dynamical system comes from a rational function $f$, then our results fully describe the possible algebraic relations on the dynamical orbit $\cO_f(a)$ when $\lim f^n(a)$ is a superattracting fixed point (relative to some valuation).  In case, $\lim f^n(a)$ is merely an attracting fixed point, then our results limit the possible algebraic relations, but in general, one would need to decide which of the deformed tori are algebraic and we do not address that question here.

\section{Generalities on analytic dynamics}
In this section we set out our notation and conventions about rigid analytic dynamics and prove some basic reductions.

Throughout this paper we work over a complete discretely valued field $(K,\vert\cdot\vert)$.  On occasion, we wish to write the valuation additively as
$v:K \to \ZZ \cup \{ \infty \}$ and to relax the hypothesis that the valuation on $K$ be discrete.   For each real number $r$ and natural number $n$ we write $\BB_n(K,r)$ for the closed polydisc $\{(x_1,\ldots,x_n) \in K^n : |x_i| \leq r \text{ for } i \leq n \}$.  Note in particular that $\BB_1(K,1)$ is the valuation ring.  While one often writes the valuation ring as $\cO_K$, when thinking of $\BB_1(K,1)$ as a ring, we shall write $R$ so as to avoid confusion with the notation for dynamical orbits.

We recall some of the basic results on rigid analysis over $K$.  The reader may wish to consult~\cite{BGR} for details, though we follow a different notation and most of what we do would make sense for other formalisms, such as those of Berkovich~\cite{Ber} or Huber~\cite{Hu}.

For each natural number $n$, the Tate algebra $K \langle x_1, \ldots, x_n \rangle$
consists of those formal power series over $K$ in the variables $x_1, \ldots, x_n$ convergent on $\BB_n(K,1)$.  That is,
$$K \langle x_1, \ldots, x_n \rangle := \{ \sum f_I x^I \in K [[x_1,\ldots,x_n]] :
\sum |f_I| < \infty \}$$
Of course, in our ultrametric setting, $\sum |f_I| < \infty$ if and only on $\lim_{|I| \to \infty} |f_I| = 0$.    Here, we have employed a multi-index notation which will appear later.  For $I = (i_1, \ldots, i_n) \in {^n}\NN$ and an $n$-tuple $(x_1,\ldots,x_n)$, we write $x^I$ for $\prod_{j=1}^n x_j^{i_j}$.

One might also consider the Tate algebra over restricted rings of coefficients.  We shall have occasion to look at $R \langle x_1, \ldots x_n \rangle = K \langle x_1, \ldots, x_n \rangle \cap R [[x_1, \ldots, x_n ]]$.  It is a basic result of Tate that $R \langle x_1, \ldots, x_n \rangle$ is Noetherian. Indeed, without the hypothesis that the valuation is discrete, it is still the case that $K \langle x_1, \ldots, x_n \rangle$ is Noetherian.  There is only one point in our arguments where the hypothesis that $R$ is a DVR is used, but it appears to be essential.

In general, one can make sense of dynamical systems in any category and with respect to any semigroup, but we shall restrict attention to analytic dynamical systems with respect to $\NN$, or with respect to $\NN^n$ but derived from one on $\NN$.  For us, a dynamical system is given by a self-map $f:X \to X$ where $X$ is a rigid analytic space and $f$ is analytic.  The dynamics are understood through the iteration of $f$.   That is, we have an induced map $\NN \times X \to X$ given by $(m,x) \mapsto f^m(x)$.  A morphism of dynamical systems $h:(X,f) \to (Y,g)$ is given by an analytic map $h:X \to Y$ for which the following diagram commutes.

$$\begin{CD} X @>{f}>> X \\ @V{h}VV  @VV{h}V \\ Y @>{g}>> Y \end{CD}$$

For a point $x \in X$ we define the forward orbit of $x$ under $f$ to be
$$\cO_f(x) := \{ f^n(x) : n \in \NN \}$$
Here, what is meant by ``point'' depends on the specific choice of our definition of analytic space.  In most of what we are doing, $X$ will be the unit ball of $K$ and one may think of $x$ as an element of the maximal ideal of $R$.

Note that if $h:(X,f) \to (Y,g)$ is a morphism of dynamical systems, $x \in X$, and $y = h(x)$, then $h$ maps $\cO_f(x)$ onto $\cO_g(y)$.

If $x \in X$ is a fixed point of $f:X \to X$, then $df_x:T_xX \to T_xX$ is another dynamical system.  We say that $x$ is an \emph{attracting} fixed point if every eigenvalue of $df_x$ has absolute value less than $1$.  We say that $x$ is a \emph{superattracting} fixed point if $df_x = 0$.  Of course, one should really further refine these notions in terms of the rank of $df_x$, but as we shall focus on the one dimensional case, such a distinction is irrelevant.

Suppose for the moment that $f:X \to X$ is an analytic dynamical system, $a \in X$, and $\overline{a} = \lim_{n \to \infty} f^n(a)$ exists.  In particular, $\overline{a}$ is a fixed point of $f$.  Possibly at the cost of replacing $a$ by $f^N(a)$ for some $N \gg 0$ and performing a change of variables, we may represent this situation in terms of a analytic dynamical system on a polydisc for which $\overline{a}$ corresponds to the origin.  In the one dimensional situation, we can further simplify the presentation.

\begin{lemma}
\label{localform}
Let $f:X \to X$ be an analytic self-map.  We assume that $\dim X = 1$, $a \in X$, and $\overline{a} = \lim_{n \to \infty} f^n(a)$ is a smooth point.  Then there is an open subspace $U \subseteq X$ with $\overline{a} \in U$, an analytic map $g:\BB_1(K,1) \to \BB_1(K,1)$, and an analytic isomorphism $h:U \to \BB_1(K,1)$ such that $f(U) \subseteq U$ and $h:(U,f \upharpoonright U) \to (\BB_1(K,1),g)$ is a map of dynamical systems. Moreover, $f^N(a) \in U$ for $N \gg 0$.
\end{lemma}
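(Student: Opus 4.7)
The plan is to produce the local coordinate in three stages: first find any disc neighborhood of $\bar{a}$ using smoothness, then shrink it to be $f$-invariant, then rescale so the shrunken disc becomes the standard unit ball $\BB_1(K,1)$.

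By continuity, $f(\bar{a}) = \lim_{n\to\infty} f(f^n(a)) = \bar{a}$, so $\bar{a}$ is a fixed point of $f$. Because $\bar{a}$ is a smooth point of the one-dimensional rigid space $X$, standard rigid geometry (the formal implicit function theorem, or the fact that a smooth affinoid curve is \'etale-locally, hence rigid-analytically locally, isomorphic to a disc) provides an admissible open $V \ni \bar{a}$ with an analytic isomorphism $\phi : V \to \BB_1(K,1)$ sending $\bar{a}$ to $0$. After replacing $V$ by $V \cap f^{-1}(V)$, which is still an open neighborhood of $\bar{a}$, I conjugate $f$ through $\phi$ to obtain an analytic function $\tilde{g}(t) = \sum_{i\geq 1} c_i t^i$ with $\tilde{g}(0) = 0$, convergent on some closed subdisc $\{|t| \leq r_0\}$.

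Next, the ultrametric estimate $|\tilde{g}(t)| \leq \max_{i\geq 1}|c_i||t|^i$, together with $|c_i|r_0^i \to 0$, shows that for any sufficiently small $r \in |K^*|$ with $r \leq r_0$, one has $|\tilde{g}(t)| \leq r$ whenever $|t|\leq r$. The DVR hypothesis is used here: $|K^*| = |\pi|^{\ZZ}$, so I may take $r = |\pi|^k$ for $k$ sufficiently large. The concrete check reduces to $|c_i| r^{i-1} \leq 1$ for all $i \geq 1$, which is immediate for $i\geq 2$ on shrinking $r$ and, for $i = 1$, amounts to $|c_1| \leq 1$; this last inequality is forced by the existence of an orbit converging non-trivially to $0$, since if $|c_1|>1$ then $|\tilde{g}(t)| = |c_1 t|$ near $0$ and any nonzero iterate would move away from $0$. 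Writing $r = |s|$ with $s \in K^*$, I set $U := \phi^{-1}(\{|t|\leq r\})$, $h := s^{-1}\phi|_U$, and $g(x) := s^{-1}\tilde{g}(sx) = \sum_{i\geq 1} c_i s^{i-1} x^i$.

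By construction $h$ is an analytic isomorphism $U \to \BB_1(K,1)$, $g \in R\langle x\rangle$ (precisely because the subdisc was chosen $\tilde{g}$-invariant), $f(U)\subseteq U$, and $h\circ f = g\circ h$, so $h$ is a morphism of dynamical systems. That $f^n(a) \in U$ for $n\gg 0$ is automatic: $\phi(f^n(a)) \to 0$, so it eventually lies in $\{|t|\leq r\}$. I expect the main technical friction to be the shrinking step --- specifically, arranging that the invariant radius lies in the value group so that the rescaling $t\mapsto t/s$ is available over $K$ without passing to a valued extension. This is exactly the use of the DVR hypothesis flagged by the author in the previous section.
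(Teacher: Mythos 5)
Your proof is correct and fills in the details of the paper's terse argument in essentially the same way: use smoothness to get a disc chart at the fixed point, use the non-repelling property (your $|c_1|\leq 1$ step) to shrink to an $f$-invariant subdisc, and rescale to normalize its radius to $1$. One small correction to your closing remark: the DVR hypothesis you invoke to choose $r\in|K^*|$ is not the essential DVR usage the author flags in Section~2 --- that occurs in Lemma~\ref{minval}, where Noetherianity of $R\langle x_1,\ldots,x_n\rangle$ is needed; here any nontrivial valuation (discrete or not) already provides arbitrarily small elements of $|K^*|$.
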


\begin{proof}
As $\overline{a}$ is a smooth point, we can find a neighborhood $V$ of $\overline{a}$ isomorphic to $\BB_1(K,1)$ via a map taking $\overline{a}$ to $0$. As $\overline{a}$ is a limit of an $f$-orbit, it must be an $f$-fixed point which is not repelling.  In particular, possibly at the cost of restricting $V$ to a smaller coordinate neighborhood $U$, $f$ maps $U$ back to itself.   Let $h:U \to \BB_1(K,1)$ express $U$ as a coordinate neighborhood with $h(\overline{a}) = 0$.  Set $g := h^{-1} \circ f \circ h$.
\end{proof}

By Lemma~\ref{localform} we may restrict attention to analytic dynamical systems on the unit disc.  Such dynamical systems may be taken to have a very simple form.  Our result on a canonical form for dynamical systems on $\BB_1(K,1)$ with $0$ as an attracting fixed point is needed only in the non-superattracting case, but we include a statement in the superattracting case to complete the picture.

\begin{lemma}
\label{formalmult}
Suppose that $f:\BB_1(K,1) \to \BB_1(K,1)$ is an analytic dynamical system on the unit disc and that $f(0) = 0$, and $\lambda := f'(0)$ has $0 < |\lambda| < 1$.  That is, $0$ is a non-superattracting, attracting fixed point.  Then there is a positive real number $\epsilon \leq 1$ and an analytic isomorphism $h:\BB_1(K,\epsilon) \to \BB_1(K,\epsilon)$ for which $f(\BB_1(K,\epsilon)) \subseteq \BB_1(K,\epsilon)$ and $h:(\BB_1(K,\epsilon),f\upharpoonright \BB_1(K,\epsilon)) \to (\BB_1(K,\epsilon),x \mapsto \lambda x)$ is an isomorphism of dynamical systems.
\end{lemma}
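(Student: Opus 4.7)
The statement is the non-archimedean Koenigs linearization theorem, and the plan is to construct the conjugating isomorphism $h$ by solving the functional equation $f \circ h = h \circ L$, where $L(x) := \lambda x$, for a formal power series $h(x) = x + \sum_{n \geq 2} c_n x^n$, and then verify convergence and invertibility on a sufficiently small disc.

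Write $f(x) = \lambda x + \sum_{n \geq 2} a_n x^n$; the hypothesis that $f$ sends $\BB_1(K,1)$ into itself guarantees $|a_n| \leq 1$ for all $n$, and $|a_n| \to 0$. Comparing coefficients of $x^n$ in the equation $h(\lambda x) = f(h(x))$ produces, for $n \geq 2$, an identity of the shape
\[
(\lambda^n - \lambda) c_n \;=\; \sum_{k=2}^n a_k \sum_{\substack{i_1 + \cdots + i_k = n \\ i_j \geq 1}} c_{i_1} \cdots c_{i_k},
\]
where on the right we set $c_1 := 1$ and the $c_n$ appears on the left alone because every $i_j \leq n-1$ in the non-trivial contributions. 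Since $|\lambda| < 1$, for each $n \geq 2$ we have $|\lambda^{n-1} - 1| = 1$ in the ultrametric, so $|\lambda^n - \lambda| = |\lambda|$, and the recursion uniquely determines $c_n \in K$.

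The key estimate is to show by induction on $n$ that $|c_n| \leq |\lambda|^{-(n-1)}$. Indeed, granting $|c_{i_j}| \leq |\lambda|^{-(i_j - 1)}$ for $i_j < n$ (trivially for $i_j = 1$), the ultrametric inequality applied to the recursion gives
\[
|c_n| \;\leq\; |\lambda|^{-1} \max_{k \geq 2,\, i_1 + \cdots + i_k = n} |a_k|\, |\lambda|^{-(n-k)} \;\leq\; |\lambda|^{-(n-1)},
\]
using $|a_k| \leq 1$ and $k \geq 2$. Consequently $|c_n x^n| \leq |\lambda| \cdot (|x|/|\lambda|)^n$, so for any real $\epsilon$ with $0 < \epsilon < |\lambda|$ the series $h(x)$ converges on $\BB_1(K,\epsilon)$ and, moreover, $|h(x) - x| < |x|$ there, which by the ultrametric forces $|h(x)| = |x|$. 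Hence $h$ maps $\BB_1(K,\epsilon)$ into itself.

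To finish, I would construct a formal inverse $h^{-1}(y) = y + \sum_{n \geq 2} d_n y^n$ by the same kind of recursion (or equivalently by inverting $h$ coefficient-by-coefficient; the leading coefficient $h'(0) = 1$ is a unit), obtain an identical bound $|d_n| \leq |\lambda|^{-(n-1)}$ by the same induction, and conclude that $h^{-1}$ is analytic on $\BB_1(K,\epsilon)$ and also preserves absolute values there. The two compositions agree with the identity as formal power series, hence as analytic functions on $\BB_1(K,\epsilon)$, so $h$ is the desired analytic isomorphism. The functional equation $f \circ h = h \circ L$, built in at the level of power series, then gives the conjugation of dynamical systems. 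That $f$ maps $\BB_1(K,\epsilon)$ into itself for such $\epsilon$ follows either a posteriori from the conjugation or directly from $|f(x)| = |\lambda x| = |\lambda| \cdot |x|$ for $|x|$ small enough.

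The only real obstacle is the convergence estimate on the $c_n$; the algebraic solvability of the recursion is immediate from $|\lambda^n - \lambda| = |\lambda| \neq 0$, and the key ultrametric observation $|\lambda^{n-1} - 1| = 1$ replaces the delicate Diophantine (small-denominator) estimates needed in the archimedean Koenigs theorem.
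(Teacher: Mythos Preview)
Your argument is correct and essentially the same as the paper's: both solve the linearizing functional equation coefficient-by-coefficient, use the ultrametric fact $|\lambda^{n}-\lambda|=|\lambda|$ to invert the recursion, and obtain the same growth bound $|c_n|\le |\lambda|^{1-n}$ to secure convergence near $0$. The only cosmetic difference is direction---the paper solves $h\circ f=\lambda h$ whereas you solve $f\circ h=h\circ L$, so your $h$ is the inverse of the paper's---and you are a bit more explicit about constructing $h^{-1}$ to certify the isomorphism, which the paper leaves implicit.
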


\begin{proof}
Write $f(x) = \sum_{i=1}^\infty f_i x^i$ where $f_1 = \lambda$.  Since $f(\BB_1(K,1)) \subseteq \BB_1(K,1)$ we have $1 \geq ||f||_1 =: \sup \{ |f(x)| : x \in \BB_1(K,1) \} = ||f||_{\text{Gau\ss}} =: \sup \{ |f_i| : i \in \NN \}$.  Thus, our hypothesis that $f$ maps the unit ball back to itself implies $f \in R \langle x \rangle$.  This implies that for any positive $\epsilon < 1$ and $x \in \BB_1(K,\epsilon)$ we have
$|f(x)| = |\sum f_i x^i| \leq \sup \{ |f_i x^i| : i \in \ZZ_+ \} \leq \sup \{ |f_i| \epsilon^i : i \in \ZZ_+ \} \leq \epsilon$.  Hence, $f$ restricts to a self-map of $\BB_1(K,\epsilon)$.

We look now for the required $h$.  Write $h(x) = \sum_{j=1}^\infty h_j x^j$.  We will take $h_1 = 1$.   If $h$ is going to work for us, then we need $h(f(x)) = \lambda h(x)$. Let us compute $h \circ f(x)$.

\begin{eqnarray*}
h \circ f(x) & = & \sum_{j=1}^\infty h_j (\sum_{i=1}^\infty f_i x^i)^j \\
& = & \sum_{j=1}^\infty \sum_{I \in {^j}\ZZ_+} h_j (\prod_{\ell=1}^j f_{I_\ell}) x^{\sum I_\ell} \\
& = & \sum_{N=1}^\infty (\sum_{\begin{array}{c} I \in {^{<\omega}} \ZZ_+ \\ \sum_\ell I_\ell = N \end{array}}
h_{|I|} \prod f_{I_\ell}) x^N
\end{eqnarray*}

Comparing the coefficients of $x^N$, we see that we wish to have
\begin{eqnarray*} \lambda h_N  & = &  \sum_{\begin{array}{c} I \in {^{<\omega}} \ZZ_+ \\ \sum_\ell  I_\ell = N \end{array}}
h_{|I|} \prod f_{I_\ell} \\
& = & h_N \lambda^N + \sum_{\begin{array}{c} I \in  {^{<\omega}} \ZZ_+ \\ \sum_\ell  I_\ell = N  \\ |I| < N  \end{array}}
h_{|I|} \prod f_{I_\ell}
\end{eqnarray*}

Thus, we may recursively solve for $h_N$ as
$$h_N := \frac{1}{\lambda - \lambda^N}  \sum_{\begin{array}{c} I \in  {^{<\omega}} \ZZ_+ \\ \sum_\ell  I_\ell = N  \\ |I| < N  \end{array}}
h_{|I|} \prod f_{I_\ell}$$

As $|\lambda| < 1$ and we have $|f_i| \leq 1$ for all $i$ and $f_1 = \lambda$, we see that $|h_N| \leq |\lambda|^{1-N}$.  Thus, if we take $\epsilon = |\lambda|$, then
$h$ is convergent on $\BB_1(K,\epsilon)$ and defines an isomorphism between $(\BB_1(K,\epsilon),f)$ and $(\BB_1(K,\epsilon),x \mapsto \lambda x)$.
\end{proof}

As we noted above, a similar canonical form exists in the superattracting case, but we do not need this result for the sequel and it only holds in the case that $K$ has characteristic zero.

\begin{prop}
Suppose that $K$ has characteristic zero.  Suppose that $f:\BB_1(K,1) \to \BB_1(K,1)$ is a nonconstant analytic self-map with $f(0) = f'(0) = 0$.  That is, $0$ is a superattracting fixed point for $f$.  Then there are $\lambda \in \BB_1(K,1)$, $M \geq 2$, $\epsilon > 0$, and an isomorphism of dynamical systems $h:(\BB_1(K,\epsilon),f) \to (\BB_1(K,\epsilon),x \mapsto \lambda x^M)$.
\end{prop}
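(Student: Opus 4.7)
My plan is to construct $h$ via the non-archimedean analogue of the Böttcher coordinate. Write $f(x) = \lambda x^M (1 + g(x))$, where $M \geq 2$ is the order of vanishing of $f$ at $0$, $\lambda := f_M$ (nonzero since $f$ is nonconstant), and $g \in x R\langle x \rangle$ (so $|g(x)| \leq |x|$ on $\BB_1(K,1)$). I seek an analytic $h$ with $h(0) = 0$ and $h'(0) = 1$ conjugating $f$ to $\phi(x) := \lambda x^M$, i.e., satisfying $h \circ f = \phi \circ h$.

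Substitute the ansatz $h(x) = x \cdot \exp(v(x))$ with $v(0) = 0$; here $\mathrm{char}(K)=0$ is essential to define $\exp$ and $\log$ as formal power series and to divide by $M^k$. The Böttcher equation reduces to the linear relation $M v(x) = v(f(x)) + \log(1 + g(x))$. Iterating this and using $f^k(x) \to 0$ produces the formal solution
\[
v(x) = \sum_{k=0}^\infty \frac{1}{M^{k+1}} \log\bigl(1 + g(f^k(x))\bigr),
\]
a well-defined element of $K[[x]]$ with $v(0) = 0$.

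The substantive step is to show this series converges on some $\BB_1(K,\epsilon)$. Here the superattracting hypothesis is crucial: an elementary induction using $|1+g(x)|=1$ for $|x|<1$ gives $|f^k(x)| \leq |\lambda|^{(M^k-1)/(M-1)} |x|^{M^k} \leq \epsilon^{M^k}$ for $|x| \leq \epsilon \leq 1$. Hence $|g(f^k(x))| \leq \epsilon^{M^k}$; taking $\epsilon$ small enough that this lies in the convergence disc of the non-archimedean logarithm (a condition on the residue characteristic of $K$), one obtains $|\log(1+g(f^k(x)))| \leq |g(f^k(x))| \leq \epsilon^{M^k}$, and so the $k$th term of $v$ has sup norm at most $\epsilon^{M^k}/|M|^{k+1}$. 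The super-exponential decay of $\epsilon^{M^k}$ trivially dominates the geometric factor $|M|^{-(k+1)}$, so the series converges absolutely and uniformly on $\BB_1(K, \epsilon)$, showing $v \in K\langle x/\epsilon\rangle$.

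Finally, $h(x) := x \exp(v(x))$ has $h(0) = 0$ and $h'(0) = 1$, so after possibly shrinking $\epsilon$ it is an analytic automorphism of $\BB_1(K,\epsilon)$; both $f$ and $\phi$ restrict to self-maps of this disc, and the identity $h \circ f = \phi \circ h$ follows from the construction of $v$. The main obstacle is the convergence step: one must choose $\epsilon$ small enough both for the iterates $f^k$ to shrink and for $\log$ (and ultimately $\exp$) to converge, which is exactly why $\mathrm{char}(K) = 0$ is needed to make the whole argument go through.
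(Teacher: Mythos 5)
Your approach is genuinely different from the paper's. The paper works directly with the power series coefficients: it writes $h(x) = \sum_{j\geq 1} h_j x^j$ with $h_1 = 1$, expands both sides of $h\circ f = \lambda h^M$, observes that the highest-index unknown appearing in the coefficient of $x^N$ is $\lambda M h_{N-(M-1)}$ (on the right), and solves recursively for the $h_j$, finally reading off $\epsilon = |M\lambda|$ from the resulting bounds. You instead take the classical B\"{o}ttcher route, reducing the conjugacy to the linearized functional equation $Mv = v\circ f + \log(1+g)$ for $v := \log(h/x)$ and solving it by telescoping into a geometric series. Both are correct; your version makes the role of $\mathrm{char}(K)=0$ (needing $\exp$, $\log$, and division by $M^k$) more transparent and the convergence argument more conceptual, while the paper's recursion yields a sharp, explicit radius with minimal auxiliary machinery and avoids the delicate interplay between the convergence disc of $\log$ and the residue characteristic.

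One imprecision to fix: with $g(x) := f(x)/(\lambda x^M) - 1$, the coefficients of $g$ are $f_{M+j}/\lambda$, which lie in $R$ only when $|\lambda|=1$. So $g$ need not lie in $xR\langle x\rangle$, and $|1+g(x)|=1$ holds only for $|x| < |\lambda|$, not all $|x|<1$; the correct bound is $|g(x)|\leq |x|/|\lambda|$. This forces $\epsilon < |\lambda|$ at the start of your induction and introduces a factor $|\lambda|^{-1}$ into $|g(f^k(x))|$, but neither change harms the super-geometric decay that drives your convergence argument, so the proof survives intact once these constants are tracked.
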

\begin{proof}
Let $M$ be minimal with $f^{(M)}(0) \neq 0$.  Write $f(x) = \sum_{i=M}^\infty f_i x^i$ and set $\lambda := f_M = \frac{f^{(M)}(0)}{M!}$.  As in Lemma~\ref{formalmult}, we see that $|f_i| \leq 1$ for all $i \in \NN$.  Moreover, if we write $h(x) = \sum_{j=1}^\infty h_j x^j$ taking $h_1 = 1$ and attempt to solve $h \circ f (x)  = \lambda (h(x))^M$, then expression for $h \circ f(x)$ computed in the course of the proof of Lemma~\ref{formalmult} is still valid.  This time, since $M > 1$, we need to expand the righthand side of the equation.

\begin{eqnarray*}
\lambda (h(x))^M & = & \lambda (\sum_{j=1}^\infty h_j x^j)^M \\
 & = & \lambda \sum_{j_1=1}^\infty \cdots \sum_{j_M=1}^\infty \prod_{\ell=1}^M h_{j_\ell} x^{\sum j_\ell} \\
 & = & \sum_{N=M}^\infty (\sum_{\begin{array}{c} J \in {\ZZ_+}^M \\ \sum J_\ell = N \end{array}}
 \lambda \prod_{\ell=1}^M h_{J_\ell}) x^N
\end{eqnarray*}

Equating the coefficients of $x^N$, we must solve
$$ \sum_{\begin{array}{c} I \in {^{<\omega}} \ZZ_+ \\ \sum_\ell I_\ell = N \end{array}}
h_{|I|} \prod f_{I_\ell} = \sum_{\begin{array}{c} J \in {\ZZ_+}^M \\ \sum J_\ell = N \end{array}}
 \lambda \prod_{\ell=1}^M h_{J_\ell}$$

In this case, the largest index for $h$ appearing in the equations occurs on the righthand side with $\lambda M h_{N - (M-1)}$.  (Recall that $h_1 = 1$).  Thus, we may formally solve for $h$ by taking
$$h_n := \frac{1}{M \lambda} [(\sum_{\begin{array}{c} I \in {^{<\omega}} \ZZ_+ \\ \sum_\ell I_\ell = n + (M-1) \end{array}}
h_{|I|} \prod f_{I_\ell}) - (\sum_{\begin{array}{c} J \in {\{1,\ldots,n-1\}}^M \\ \sum J_\ell = n+(M-1) \end{array}}
\lambda \prod_{\ell=1}^M h_{J_\ell})]$$

If we take $\epsilon := |M \lambda|$, then $h$ defines an analytic automorphism of $\BB_1(K,\epsilon)$.
\end{proof}

\section{Special analytic varieties}
In this section we study the special analytic varieties.  That is, we look at those analytic varieties which must contain dense sets of points from dynamical orbits.  These analytic varieties come in two forms: those coming from graphs of iterates of the dynamical system itself and those coming from pullbacks of algebraic tori.  When considering nonsuperattracting dynamics, the special analytic varieties of the first kind are instances of special varieties of the second kind.

Throughout this section, we work over the complete valued field $(K,\vert\cdot\vert)$
with corresponding additive valuation $v$ and $f:\BB_1(K,1) \to \BB_1(K,1)$ is a nonconstant analytic self-map of the unit disc for which $0$ is an attracting fixed point. Write $f(x) = \sum_{i=M}^\infty f_i x^i$ where $\lambda = f_M \neq 0$. We fix some $a \in K$ with $|a| < |\lambda|$.

Before we introduce our special varieties, we compute the valuations of the elements of $\cO_f(a)$.

\begin{lemma}
\label{iterval}
 If $M = 1$, then for each natural number $n$ we have $v(f^n(a)) = v(a) + n v(\lambda)$.  If $M \geq 2$,  then for each natural number $n$ we have
$v(f^n(a)) = M^n v(a) + \frac{M^n - 1}{M-1} v(\lambda) = M^n [v(a) + \frac{v(\lambda)}{M -1}] + \frac{v(\lambda)}{1 - M}$.
\end{lemma}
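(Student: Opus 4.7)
The plan is to prove both formulas by induction on $n$, with the base case $n=0$ immediate (both formulas collapse to $v(a)$). The only real content is establishing the one-step recursion
\[
v(f(b)) = Mv(b) + v(\lambda)
\]
whenever $b$ satisfies $v(b) > v(\lambda)$, and checking that this condition is preserved under $f$ so the induction goes through.

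For the one-step recursion, I would write $f(b) = \lambda b^M + \sum_{i>M} f_i b^i$ and use the ultrametric inequality. Since $f$ maps the unit disc into itself, Lemma~\ref{formalmult}'s observation that $f \in R\langle x\rangle$ gives $|f_i| \leq 1$ for every $i$, hence $|f_i b^i| \leq |b|^i$. The claim will follow from the strict inequality $|f_i b^i| < |\lambda b^M|$ for $i > M$, which is equivalent to $|b|^{i-M} < |\lambda|$. Since $|b| < |\lambda| \leq 1$ and $i - M \geq 1$, one has $|b|^{i-M} \leq |b| < |\lambda|$, so the dominant term in the ultrametric sum is $\lambda b^M$, giving $v(f(b)) = v(\lambda) + Mv(b)$.

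To close the induction, I must check that $v(f(b)) > v(\lambda)$ whenever $v(b) > v(\lambda)$. In the case $M = 1$ this reads $v(\lambda) + v(b) > v(\lambda)$, which holds because $v(b) > v(\lambda) \geq 0$. For $M \geq 2$ it reads $v(\lambda) + Mv(b) > v(\lambda)$, which is likewise immediate. So starting from $v(a) > v(\lambda)$, every iterate $v(f^n(a))$ lies strictly above $v(\lambda)$, and the recursion applies at every stage.

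Finally I would just solve the linear recursion $v_{n+1} = Mv_n + v(\lambda)$ with $v_0 = v(a)$. For $M = 1$ this telescopes to $v_n = v(a) + nv(\lambda)$. For $M \geq 2$, iterating gives
\[
v_n = M^n v(a) + v(\lambda)\bigl(1 + M + \cdots + M^{n-1}\bigr) = M^n v(a) + \frac{M^n - 1}{M-1} v(\lambda),
\]
and the alternative form is obtained by algebraic rearrangement. There is no serious obstacle here; the only place one has to be a little careful is verifying the strict ultrametric inequality $|b|^{i-M} < |\lambda|$ for all $i > M$, which relies crucially on the hypothesis $|a| < |\lambda|$ (rather than merely $|a| < 1$) and on $|\lambda| \leq 1$.
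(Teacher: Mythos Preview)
Your proposal is correct and follows essentially the same approach as the paper: establish the one-step identity $|f(b)| = |\lambda b^M|$ for $|b| < |\lambda|$ via the ultrametric inequality and $|f_i|\le 1$, verify that this hypothesis propagates to all iterates, and then unwind the resulting linear recurrence. Your write-up is in fact slightly more explicit than the paper's about why the higher-order terms are strictly dominated and about solving the recurrence.
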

\begin{proof}
Note that we have a background hypothesis that $f$ maps the unit disc back to itself.  Hence, $|f_i| \leq 1$ for all $i$.  It follows from the ultrametric inequality, that if $|b| < |\lambda|$, then $|f(b)| = |\lambda b^M|$.

In each of the cases, the initial instance of $n = 0$ is trivial to verify.  Moreover, by induction in each case we have $|f^n(a)| < \lambda$. For the inductive step, in the nonsuperattracting case we have by induction $v(f^{n+1}(a)) = v(f(f^n(a))) = v(\lambda f^n(a)) = v(\lambda) + v(f^n(a)) = v(\lambda) + v(a) + n v(\lambda) = v(a) + (n+1) v(\lambda)$.  In the superattracting case, we have $v(f^{n+1}(a)) = v(f(f^n(a))) = v(\lambda (f^n(a))^M) = v(\lambda) + M v(f^n(a)) = v(\lambda) + M (M^n v(a) + \frac{M^n - 1}{M-1} v(\lambda))  = M^{n+1} v(a) + \frac{M^{n+1} - 1}{M - 1} v(\lambda)$.
\end{proof}

With Lemma~\ref{iterval} in place, we may introduce our first class of special varieties.

\begin{Def}
Let $n \in \ZZ_+$ and let $D \subseteq K \langle x_1, \ldots, x_n \rangle$ be an ideal generated by a set of the form $\{ x_i - f^{m_i}(a) : i \in I \} \cup \{ x_j - f^{\ell_{(j,k)}}(x_k) : (j,k) \in J \}$ where $I \subseteq \{1, \ldots, n \}$,
$J \subseteq \{ 1, \ldots, n \}^2$, and each $m_i$ and $\ell_{(j,k)}$ is a natural numbers.  We write $X_D = V(D)$ for the analytic variety defined by $D$ and say that
$X_D$ is an \emph{iterational special variety}.
\end{Def}

It helps to choose canonical defining equations for iterational special varieties.
Of course, it is possible that the above equations are inconsistent.  In this case, $1$ generates $D$.  Otherwise, given an iterational variety $X$, let $I' := \{ i \in \{ 1, \ldots, n \} : x_i = f^m(a) \text{ holds on } X \text{ for some } m \in \NN \}$.  For each $i \in M$, let $m_i \in \NN$ be the unique such $m$.  Note that since $|f^m(a)| < |f^\ell(a)|$ for $m > \ell$ by Lemma~\ref{iterval}, there can be only one such $m$ if $X \neq \varnothing$.  Let $J'$ be the set of pairs $(j,k)$ for which $x_j = f^\ell(x_k)$ holds on $X$ for some $\ell$.  As before, for each such pair there is a unique $\ell_{(j,k)}$ for which this is true.  By this uniqueness, these equations must include the defining equations for $X$.

\begin{prop}
\label{iterint}
Let $X$ be a nonempty iterational special variety.  Let $I'$, $\{ m_i : i \in I' \}$, $J'$, and $\{ \ell_{(j,k)} : (j,k) \in J' \}$ be the data computed above for $X$.  Then $X$ is irreducible and $X(K) \cap \cO_f(a)^n = \{ (f^{t_1}(a), \ldots, f^{t_n}(a)) :
t_i = m_i \text{ for } i \in I', t_k = t_j + \ell_{(j,k)} \text{ for } (j,k) \in J' \}$ is analytically Zariski dense in $X$.
\end{prop}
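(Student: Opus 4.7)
The plan is to exhibit $X$ explicitly as isomorphic to a polydisc via an analytic parameterization built from the defining equations; irreducibility and the analytic Zariski density statement will then follow formally, while the set-theoretic identity is read off the equations together with Lemma~\ref{iterval}.

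First I would organize the defining equations. Using the uniqueness of the exponents $m_i$ and $\ell_{(j,k)}$, the relation on $\{1,\ldots,n\}\setminus I'$ given by $j\sim k$ iff $(j,k)\in J'$ or $(k,j)\in J'$ generates an equivalence relation, and within each equivalence class $C$ the assignment $(j,k)\mapsto \ell_{(j,k)}$ defines a consistent relative orbit-depth. Picking a representative $s_C$ of minimal depth in each class, every other $j$ in the class then has a unique $d_j\geq 0$ with $x_j = f^{d_j}(x_{s_C})$ forced on $X$; likewise each $i\in I'$ is pinned at $x_i = f^{m_i}(a)$, consistently with the $\ell$'s because $X\neq\varnothing$. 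Let $F$ be the set of representatives $s_C$ for classes that contain no element of $I'$, and define $\phi: \BB_{|F|}(K,1) \to \BB_n(K,1)$ by $(y_s)_{s\in F} \mapsto (x_1,\ldots,x_n)$ with $x_i = f^{m_i}(a)$ for $i \in I'$, $x_s = y_s$ for $s \in F$, and $x_j = f^{d_j}(y_{s_C})$ for the remaining $j$. Since $f$ maps the unit disc to itself, $\phi$ is a well-defined analytic map landing in $X$, and any point of $X$ is in its image because its $F$-coordinates determine all others through the defining equations. Hence $\phi$ is an analytic isomorphism onto $X$, which is therefore irreducible.

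The set-theoretic description of $X(K)\cap \cO_f(a)^n$ is now mechanical: a point $(f^{t_1}(a),\ldots,f^{t_n}(a))$ lies in $X$ iff $f^{t_i}(a) = f^{m_i}(a)$ for $i \in I'$ and $f^{t_j}(a) = f^{\ell_{(j,k)} + t_k}(a)$ for $(j,k) \in J'$, and by Lemma~\ref{iterval} distinct iterates of $a$ have distinct valuations and so are distinct, which reduces these conditions to $t_i = m_i$ and $t_j = t_k + \ell_{(j,k)}$. For analytic Zariski density I would transport the question via $\phi$: the intersection corresponds to $\cO_f(a)^{|F|} \subseteq \BB_{|F|}(K,1)$. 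The one-variable case holds because $\cO_f(a)$ accumulates at $0$, so any convergent power series vanishing on $\cO_f(a)$ is identically zero; an induction on the number of free coordinates, fixing all but one to orbit points and applying the one-variable case to the coefficient power series of the resulting function, gives density in the polydisc, hence in $X$.

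The main obstacle is the first step: verifying that the equations from $I'$ and $J'$ genuinely assemble into the above forest structure and that $\phi$ is a bijection onto $X$. The critical ingredients there are the uniqueness of $m_i$ and $\ell_{(j,k)}$, whose deeper reason is the strict monotonicity of valuations in Lemma~\ref{iterval}, together with the hypothesis $X\neq\varnothing$, which together rule out cyclic inconsistencies in the system of equations.
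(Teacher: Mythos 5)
Your proof is correct and follows essentially the same route as the paper's extremely terse argument: identify a set of free coordinates (the roots of the forest you describe), observe that $X$ is thereby isomorphic to a polydisc and hence irreducible, and read off the orbit intersection from the canonical defining equations together with the injectivity of $n \mapsto f^n(a)$ supplied by Lemma~\ref{iterval}. What the paper leaves implicit---that the coordinate ring is a Tate algebra in a suitable subset of the variables, and that ``the calculation of $X \cap \cO_f(a)^n$ is clear''---is exactly what you supply: the consistency of the $J'$-relations (which does rely on the paper's assertion that each $\ell_{(j,k)}$ is unique, so that composing relations around any chain closes up) and the inductive one-variable-at-a-time argument for analytic Zariski density of the orbit power in the polydisc.
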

\begin{proof}
Let $G := \{ k \in \{ 1, \ldots, n \} : k \notin I' \text{ and } (\nexists j) (j,k) \in J'\}$.  It should be clear that the coordinate ring of $X$ is $K \langle \{ x_k : k \in G \} \rangle$ so that in particular, $G$ is irreducible being isomorphic to the polydisc $\BB_{|G|}(K,1)$.

The calculation of $X \cap \cO_f(a)^n$ is clear.
\end{proof}

\begin{Remark}
It bears noting, that by Lemma~\ref{iterval}, the map $\NN^n \to \cO_f(a)^n$ given by $(t_1,\ldots,t_n) \mapsto (f^{t_1}(a),\ldots,f^{t_n}(a))$ is a bijection.  Thus, $X \cap \cO_f(a)^n$ corresponds to a translate of a diagonal submonoid of $\NN^n$, that is a submonoid defined by equalities of the form $t_i = 0$ and $t_j = t_k$.
\end{Remark}

The other class of special varieties we shall encounter also correspond to translates of submonoids of $\NN^n$ but in this case every submonoid of the form $G \cap \NN^n$ where $G$ is a subgroup of $\ZZ^n$ may appear.

\begin{Def}
We assume now that $M = 1$.  Let $h:(\BB_1(K,|\lambda|),f) \to (\BB_1(K,|\lambda|),x \mapsto \lambda x)$ be the analytic isomorphism of Lemma~\ref{localform}. We say that an analytic variety $X \subseteq \BB_n(K,|\lambda|)$ is a \emph{deformed torus} if there is a connected algebraic group $T \leq \Gm_K^n$ and a point $\xi \in \cO_{\lambda x}(h(a))^n$ for which $h(X) = \xi T \cap \BB_n(K,|\lambda|)$ where we have written ``$h$'' for the induced map on $\BB_n(K,|\lambda|)$ and $\Gm_K$ for the multiplicative group scheme over $K$.
\end{Def}

\begin{Remark}
The notion of a deformed torus depends on the choice of the analytic automorphism $h$ and the reader should be aware that when we speak of an analytic variety being a deformed torus, we have in mind a fixed choice of $h$.
\end{Remark}

As algebraic tori are defined by character equations, deformed tori also have simple canonical defining equations in terms of pullbacks of character equations.

\begin{prop}
\label{deftorint}
Suppose that $X$ is a deformed torus.  Let $\xi = h((f^{t_1}(a)),\ldots,f^{t_n}(a))$ and $T \leq \Gm_K$ be an algebraic torus witnessing that $X$ is a deformed torus via $X = h^{-1}(\xi T \cap \BB_n(K,|\lambda|))$.  Let
$A \in M_{m \times n}(\ZZ)$ be a matrix for which $T$ is defined by $\prod_{i=1}^n x_i^{A_{j,i}} = 1$ ($j \leq m$).  Then $X \cap \cO_f(a)^n = \{ (f^{s_1}(a),
\ldots, f^{s_n}(a)) :  \sum_{i=1}^n A_{j,i} (s_j - t_j) = 0  \text{ for } j \leq m \}$
and is analytically Zariski dense in $X$.
\end{prop}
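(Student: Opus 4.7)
The plan is to transport the problem through the conjugating isomorphism $h$ to the linear dynamical system $x \mapsto \lambda x$ on $\BB_n(K,|\lambda|)$, where the intersection computation is a direct arithmetic identity and analytic density reduces to the non-Archimedean identity principle. Since $h$ is an isomorphism of dynamical systems, applying it coordinate-wise gives $h(f^{s_1}(a), \ldots, f^{s_n}(a)) = (\lambda^{s_i} h(a))_i = \xi \cdot (\lambda^{s_i-t_i})_i$. Because $|h(a)| < |\lambda|$ automatically places the tuple in $\BB_n(K,|\lambda|)$, membership in $h(X) = \xi T \cap \BB_n(K,|\lambda|)$ reduces to $(\lambda^{s_i-t_i})_i \in T$, which by the defining character equations says $\lambda^{\sum_i A_{j,i}(s_i - t_i)} = 1$ for each $j \leq m$. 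Since $|\lambda| < 1$ forbids $\lambda$ from being a nontrivial root of unity, each exponent must vanish, yielding the stated set-theoretic description.

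For analytic Zariski density, let $L := \ker A \cap \ZZ^n$ of rank $d = \dim T$ and fix a basis $e^{(1)}, \ldots, e^{(d)}$ of $L$. Parametrize $T$ by the monomial map $\psi: \Gm^d \to T$ sending $u$ to $(\prod_k u_k^{e^{(k)}_i})_i$, which is surjective up to a finite kernel. Given an analytic function $g$ on $X$ vanishing on $X\cap \cO_f(a)^n$, pull it back through $h^{-1} \circ (\xi \cdot \psi)$ to an analytic function $\tilde g$ on the admissible open in $\Gm^d$ cut out by the polydisc constraints $|\xi_i \prod_k u_k^{e^{(k)}_i}| \leq |\lambda|$; by the intersection description, $\tilde g$ vanishes on $\{(\lambda^{c_1}, \ldots, \lambda^{c_d}) : c \in C\}$, where $C = \{c \in \ZZ^d : t + \sum_k c_k e^{(k)} \in \NN^n\}$. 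After choosing the basis so that $C$ contains a full-dimensional positive sub-cone of $\NN^d$ (so that all exponents $e^{(k)}_i$ are nonnegative and $\psi$ extends analytically to a map $\BA^d \to \BA^n$), these parameter points accumulate at $0 \in K^d$, which lies in the closure of the domain of $\tilde g$. An iterated application of the one-variable non-Archimedean identity principle --- a power series convergent on $\BB_1(K,1)$ vanishing on an infinite set with an accumulation point in $\BB_1(K,1)$ is identically zero --- in each variable $u_k$ then forces $\tilde g \equiv 0$, hence $g \equiv 0$ on $X$.

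The main obstacle lies in the density step, and specifically in arranging a basis of $L$ whose vectors lie in $\NN^n$ so that the cone $C$ is full-dimensional in $\NN^d$ and the monomial parametrization $\psi$ accepts $0 \in K^d$ as an interior accumulation point. This amounts to the lattice-geometric requirement that $L \cap \NN^n$ have rank $d$, which is where the genuine geometric content of the proposition sits; one exploits the freedom to shift by $t$ and to replace the basis by suitable positive integer combinations. The arithmetic step (killing root-of-unity relations via $|\lambda| < 1$) and the transport through $h$ are by contrast routine.
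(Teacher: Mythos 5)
Your approach mirrors the paper's: reduce via the conjugating isomorphism $h$ to the model system $x \mapsto \lambda x$, compute the intersection from the fact that $0 < |\lambda| < 1$ forbids $\lambda$ from being a nontrivial root of unity, and then argue analytic Zariski density. The paper disposes of the model case with the remark that the result is ``clear''; you elaborate where it is least clear, namely the density assertion. Your computation of the intersection set itself is correct, and in fact it quietly corrects an index slip in the proposition's displayed formula, which should read $\sum_{i=1}^n A_{j,i}(s_i - t_i) = 0$.

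The gap lies exactly where you flag it, and the repair you suggest does not work. For the monomial parametrization $\psi: \Gm^d \to T$ to extend to the origin so that the parameter points $(\lambda^{c_1},\ldots,\lambda^{c_d})$ accumulate inside the domain, you need the lattice $L = \ker A \cap \ZZ^n$ to admit a $\ZZ$-basis lying in $\NN^n$, i.e.\ $L \cap \NN^n$ must span $L$. But translating by $t$ leaves $L$ untouched, and taking positive integer combinations of a basis of $L$ yields vectors in $\NN^n$ only if $L$ already contains a rank-$d$ family of nonnegative vectors --- which is precisely the condition whose truth you are trying to establish, not a free adjustment. Moreover the condition can genuinely fail: take $n=2$, $T = V(x_1 x_2 - 1)$, so $L = \ZZ \cdot (1,-1)$ and $L \cap \NN^2 = \{0\}$. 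Then $\xi T \cap \BB_2(K,|\lambda|)$ is a nontrivial one-dimensional annulus, whereas the orbit intersection $\{(f^{s_1}(a),f^{s_2}(a)) : s_1 + s_2 = t_1 + t_2\}$ is a finite set, so the density conclusion fails for this deformed torus. In short, the step you identify as carrying ``the genuine geometric content'' is not a technicality to be smoothed over: without an additional positivity hypothesis on $T$ (equivalently, that $L$ be generated by $L \cap \NN^n$), the density half of the proposition does not hold in the stated generality, and neither your argument nor the paper's one-line appeal to the model case closes this gap.
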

\begin{proof}
In the case that we are already dealing with $f(x) = \lambda x$ so that $h$ is the identity function and $X = \xi T$, then the result is clear.  In general, as $h$ is an isomorphism of dynamical systems, $h$ induces a bijection between
$X \cap \cO_f(a)^n$ and $\xi T \cap \cO_{\lambda x}(h(a))^n$ respecting the action of $\NN^n$.
\end{proof}

We are now in a position to state our main theorem, though its proof will require a few more lemmata.

\begin{theorem}
\label{mainthm}
If $M = 1$, then an irreducible analytic variety $X \subseteq \BB_n(K,|\lambda|)$ meets $\cO_f(a)^n$ in an analytically Zariski dense set if and only if $X$ is a deformed torus.  If $M \geq 2$, then an irreducible analytic variety meets $\cO_f(a)^n$ in an analytically Zariski dense set if and only if it is iterational.
\end{theorem}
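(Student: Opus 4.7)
The ``if'' direction of both assertions is furnished by Propositions~\ref{iterint} and~\ref{deftorint}, so my task is the converse. By Lemma~\ref{formalmult}, in the $M=1$ case I may conjugate $f$ to $x \mapsto \lambda x$ on $\BB_1(K,|\lambda|)$ (the pullback by the conjugating isomorphism carries deformed tori to genuine torus translates); in the $M \geq 2$ case I keep $f(x) = \lambda x^M + O(x^{M+1})$, since the analogous normal form would require characteristic zero and is not needed.

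Let $X$ be an irreducible analytic variety meeting $\cO_f(a)^n$ analytically Zariski densely, let $F = \sum_I F_I x^I \in K\langle x_1,\ldots,x_n\rangle$ vanish on $X$, and set $S := \{t \in \NN^n : (f^{t_1}(a),\ldots,f^{t_n}(a)) \in X\}$, so that $\sum_I F_I \prod_j f^{t_j}(a)^{i_j} = 0$ for every $t \in S$.  The ultrametric triangle inequality forces the minimum valuation among the nonzero summands to be attained at least twice; invoking Lemma~\ref{iterval}, this yields, for a pair $I \neq I'$ depending on $t$, the equation
$$\sum_j (i_j - i'_j)\, v(f^{t_j}(a)) = v(F_{I'}) - v(F_I),$$
whose right hand side is linear in $t$ when $M = 1$ and involves the exponentials $M^{t_j}$ when $M \geq 2$.

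In the $M = 1$ case, letting $F$ range over a finite generating set of the ideal of $X$ (available by noetherianity of $R\langle x_1,\ldots,x_n\rangle$) places $S$ inside a finite Boolean combination of solution sets of linear equations over the value group.  The theorem of van den Dries and G\"{u}nayd{\i}n, applied to the finitely generated multiplicative subgroup $\langle \lambda \rangle \leq K^\times$, converts this into the statement that $S$ is a finite union of cosets of sub-monoids of $\NN^n$; irreducibility of $X$ together with analytic Zariski density of $S$ isolates a single such coset, whose dual character group cuts out exactly the subtorus $T \leq \Gm^n$ witnessing that $X$ is a deformed torus.  In the $M \geq 2$ case, the dominance of the exponentials $M^{t_j}$ is more restrictive: letting a single $t_k$ tend to infinity while keeping the other $t_j$'s bounded, the minimum-attained-twice condition quickly forces $i_k = i'_k$ outside of collisions produced by bounded differences, and propagating this across all coordinates yields precisely the iterational equalities $t_i = m$ and $t_j = t_k + \ell$.

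I expect the principal obstacle to be passing from a constraint coming from an individual $F$ to a statement about the full ideal of $X$, and in particular to ensuring that the candidate special variety that one extracts coincides with $X$ rather than strictly containing it.  The DVR hypothesis on $R$ is used precisely in the discreteness required for the minimum-attained-twice analysis to extract finitary, well-posed linear data from the value group, without which one would have to cope with arbitrarily small valuation differences producing spurious cancellations.
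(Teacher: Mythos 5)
Your high-level plan is the paper's: use Lemma~\ref{iterval} to turn the ultrametric coincidence-of-minimal-valuations condition into Diophantine constraints on the exponent tuple $t$, then identify the solution set with the trace of a special variety, then upgrade the containment to equality.  But there are three concrete problems.

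First, and most seriously, you never make the family of constraints finite.  For a single $F = \sum_I F_I x^I$ vanishing on $X$, the ultrametric argument says only that for each $t \in S$ \emph{some} pair $I \neq I'$ (depending on $t$) achieves the minimal valuation twice; since $F$ has infinitely many monomials, this a priori produces infinitely many candidate equations, and your subsequent ``finite Boolean combination'' claim is unsupported.  Choosing a finite generating set of the ideal of $X$ does not help, because the issue is per generator.  The paper handles this with Lemma~\ref{minval}, which uses Noetherianity of the \emph{integral} Tate algebra $R\langle x_1,\ldots,x_n\rangle$ (this, not the vague ``arbitrarily small valuation differences'' you gesture at, is where the DVR hypothesis enters) to produce a finite set $\cF$ of multi-indices such that the minimum is always achieved within $\cF$.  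Without this lemma or a substitute, the proof does not get off the ground.

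Second, you have the two cases' tools swapped.  When $M = 1$, the relation from Lemma~\ref{iterval} is $v(f^t(a)) = v(a) + t\,v(\lambda)$, so the coincidence condition is a genuinely \emph{linear} equation $\sum (I_i - J_i)\,v(\lambda)\,t_i = v(g_J) - v(g_I) + (|J|-|I|)\,v(a)$ in the $t_i$; its solution set in $\NN^n$ is already the trace of a deformed torus, and no Mann-property input is needed.  When $M \geq 2$, one instead gets $\sum (I_i-J_i)\,M^{t_i}\,[v(a)+\tfrac{v(\lambda)}{M-1}] = \text{const}$, which after setting $\gamma_i := M^{t_i}$ is precisely an $S$-unit-type equation over the cyclic group $\Gamma = M^\ZZ$.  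This is exactly where Lemma~\ref{mannthm} (van den Dries--G\"{u}nayd{\i}n) is applied in the paper.  Your sketch in this case (``letting a single $t_k \to \infty$ while keeping the others bounded'') does not constitute a proof of the required finiteness of nondegenerate solutions; it is the Mann property that does the work, and you have instead invoked it in the $M=1$ case where it is idle.

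Third, you correctly flag that the argument so far only yields $X \subseteq Y$ for a special variety $Y$, and that the real issue is showing $X = Y$, but you leave that open.  The paper closes it by induction on $n$: the base case is trivial, and at stage $n+1$, after cutting down to $X \subseteq Y$, one uses irreducibility of $X$ together with the inductive hypothesis applied to coordinate projections of $X$ to conclude that $X$ already projects Zariski-densely onto each $\BB_n$, whence the finiteness of the projection from $Y$ forces $X = Y$.  Together with Lemma~\ref{closetozero} (which lets one discard finitely many initial iterates and work near the fixed point), this inductive mechanism is an essential part of the argument that your proposal does not reconstruct.
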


With Propositions~\ref{iterint} and~\ref{deftorint} we have already noted that the right-to-left implications hold.  Thus, it remains for us to prove the left-to-right implications.

Let us start with the easiest of the three remaining lemmata.

\begin{lemma}
\label{closetozero}
In Theorem~\ref{mainthm}, it suffices  to assume that there is some positive $\epsilon \leq |\lambda|$ with $X \cap \BB_n(K,\epsilon) \cap \cO_f(a)^n$ Zariski dense in $X$.
\end{lemma}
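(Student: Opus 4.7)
The plan is to establish the reduction by induction on $n$, exploiting the fact that $\cO_f(a)^n$ admits a finite stratification in which one stratum lies deep inside the polydisc (and hence in $\BB_n(K,\epsilon)$) while the remaining strata are contained in coordinate hyperplanes of the form $\{x_i = f^{\tau(i)}(a)\}$, whereupon irreducibility of $X$ forces the dense intersection onto a single stratum.

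More precisely, fix any $0 < \epsilon \leq |\lambda|$ and pick $N \in \NN$ so large that $f^t(a) \in \BB_1(K,\epsilon)$ if and only if $t \geq N$; the strict monotonicity of $|f^t(a)|$ in $t$ established in Lemma~\ref{iterval} makes this possible. For each nonempty $S \subseteq \{1,\ldots,n\}$ and each $\tau:S \to \{0,\ldots,N-1\}$, set
$$E_{S,\tau} := \{(f^{t_1}(a),\ldots,f^{t_n}(a)) : t_i = \tau(i) \text{ for } i \in S,\ t_j \geq N \text{ for } j \notin S\},$$
and put $E_0 := \cO_f(a)^n \cap \BB_n(K,\epsilon)$. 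Then $\cO_f(a)^n$ is the disjoint union of $E_0$ with the finitely many $E_{S,\tau}$. Intersecting with $X$ and passing to analytic Zariski closures yields
$$X \;=\; \overline{X \cap E_0} \;\cup\; \bigcup_{(S,\tau)} \overline{X \cap E_{S,\tau}},$$
a finite union of analytic subvarieties of $X$. Since $X$ is irreducible, one term must equal $X$.

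If $\overline{X \cap E_0} = X$, then $X \cap \BB_n(K,\epsilon) \cap \cO_f(a)^n$ is analytically Zariski dense in $X$, which is exactly the hypothesis to which we are reducing. Otherwise $\overline{X \cap E_{S,\tau}} = X$ for some $(S,\tau)$ with $S \neq \varnothing$, forcing $X \subseteq \{x_i = f^{\tau(i)}(a) : i \in S\}$. The projection $\pi$ away from the coordinates in $S$ then identifies $X$ with an irreducible analytic subvariety $X' \subseteq \BB_{n-|S|}(K,|\lambda|)$, and $X' \cap \cO_f(a)^{n-|S|}$ is analytically Zariski dense in $X'$, being the $\pi$-image of the dense set $X \cap E_{S,\tau}$. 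By induction on $n$, the full conclusion of Theorem~\ref{mainthm} holds for $X'$, and we lift it back to $X$ by adjoining the equations $x_i = f^{\tau(i)}(a)$, $i \in S$: in the iterational case these are admissible defining equations; in the deformed-torus case a presentation $h(X') = \xi' T' \cap \BB_{n-|S|}(K,|\lambda|)$ extends to $h(X) = \xi T \cap \BB_n(K,|\lambda|)$ by setting $T := T' \times \{1\}^S \leq \Gm^n$, $\xi_j := \xi'_j$ for $j \notin S$, and $\xi_i := h(f^{\tau(i)}(a)) = \lambda^{\tau(i)} h(a)$ for $i \in S$.

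The base case $n = 0$ is vacuous, and the inductive machinery handles all $n \geq 1$ uniformly; no step appears to be a real obstacle, since the substantive work of the theorem (transforming analytic relations into linear relations on the value group and invoking van den Dries--G\"{u}nayd{\i}n) is postponed to the close-to-zero case. The only conceptual point to note is that finiteness of the stratification is essential: it is what allows the irreducibility of $X$ to concentrate the density onto a single piece and thereby triggers either the reduction to $\BB_n(K,\epsilon)$ or the dimensional induction.
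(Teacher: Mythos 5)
Your argument is correct and follows essentially the same route as the paper's: decompose $\cO_f(a)^n$ into a close-to-zero part and finitely many pieces lying in coordinate hyperplanes $\{x_i = f^t(a)\}$ with $t$ small, use irreducibility of $X$ to force density onto a single piece, and induct on $n$ (the paper fixes one coordinate at a time rather than stratifying by all subsets $S$, but that is a cosmetic difference). Your explicit lifting of the deformed-torus/iterational structure back to $X$ by adjoining the equations $x_i = f^{\tau(i)}(a)$ is a detail the paper leaves implicit but is exactly what is meant.
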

\begin{proof}
We work by induction with the case of $n = 0$ being trivial. Let $X$ be an irreducible variety containing a dense set of points from $\cO_f(a)^n$.  Let $\epsilon \leq |\lambda|$ be a positive number. As $\lim_{m \to \infty} f^m(a) = 0$, there is a positive integer $N$ such that $f^m(a) \in \BB_1(K,\epsilon)$ for $m > N$. Thus, $X \cap \cO_f(a)^n =  (X \cap \BB_n(K,|\epsilon|) \cap \cO_f(a)^n) \cup
\bigcup_{j=1}^n \bigcup_{i=0}^N [X \cap \pi_j^{-1}\{f^i(a)\}] \cap \cO_f(a)^n$ where $\pi_j$ is the projection onto the $j^\text{th}$ coordinate.  As $X$ is irreducible, if
$X \cap \BB_n(K,\epsilon) \cap \cO_f(a)^n$ is not Zariski dense in $X$, then $X$ must be equal to one of $X \cap \pi_j^{-1}(f^i(a))$.  The coordinate projection onto the complementary coordinates defines an isomorphism between $X$ and an analytic subvariety of $\BB_{n-1}(K,|\lambda|)$ and takes $\cO_f(a)^n$ to $\cO_f(a)^{n-1}$.  Hence, by induction, $X$ already has the requisite form.
\end{proof}

The next lemma is the only place in our argument where the hypothesis that $K$ is discretely valued is used.

\begin{lemma}
\label{minval}
Let $n \in \ZZ_+$ be a positive integer and $G = \sum_I g_I x^I \in R \langle x_1, \ldots, x_n \rangle$ a convergent power series in $n$ variables over $R$.  There is a
finite set $\cF$ of multi-indices such that for any $b \in \BB_n(K,1)$ for some $I \in \cF$ and for all $J \in \NN^n$ we have $|g_I b^I| \geq |g_J b^J|$.
\end{lemma}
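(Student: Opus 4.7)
The plan is to recognize that maximizing $|g_I b^I|$ over $I$ is the same as minimizing a nonnegative linear form over the ``support'' of $G$, and to use the discreteness of the valuation to put that support inside $\NN^{n+1}$ so that Dickson's lemma yields a finite set of dominating indices.

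Since $G \in R\langle x_1, \ldots, x_n\rangle$, each $g_I \in R$ has $v(g_I) \in \NN \cup \{\infty\}$. For $b \in \BB_n(K,1)$, write $e_j := v(b_j) \in \NN \cup \{\infty\}$, so that $v(g_I b^I) = v(g_I) + \sum_j e_j I_j$. The desired inequality $|g_I b^I| \geq |g_J b^J|$ becomes the linear statement $v(g_I) + \sum_j e_j I_j \leq v(g_J) + \sum_j e_j J_j$ in the nonnegative coordinates $(v(g_I), I)$. I would then form the support
$$T := \{(v(g_I), I) : g_I \neq 0\} \subseteq \NN^{n+1}.$$
The key monotonicity is immediate: if $(a, I) \leq (a', I')$ componentwise in $\NN^{n+1}$, then $a + \sum_j e_j I_j \leq a' + \sum_j e_j I'_j$ for any $e \in (\NN \cup \{\infty\})^n$, with the convention $0 \cdot \infty = 0$.

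Next I would invoke Dickson's lemma on $\NN^{n+1}$: the set $\cF'$ of componentwise-minimal elements of $T$ is finite. Let $\cF$ be the projection of $\cF'$ onto the last $n$ coordinates. To verify that $\cF$ works, fix $b$; since $|g_J| \to 0$ as $|J| \to \infty$ and $|b^J| \leq 1$, the supremum $\sup_J |g_J b^J|$ is attained at some $J^*$. If the supremum is zero there is nothing to prove; otherwise Dickson supplies $I \in \cF$ with $v(g_I) \leq v(g_{J^*})$ and $I \leq J^*$ coordinatewise, and the monotonicity remark yields $|g_I b^I| \geq |g_{J^*} b^{J^*}| \geq |g_J b^J|$ for every $J \in \NN^n$.

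The sole use of the DVR hypothesis is in forcing $v(g_I) \geq 0$, which embeds $T$ in $\NN^{n+1}$ and allows Dickson's lemma to apply to the valuation coordinate. For a densely valued $K$ this coordinate would range over a dense subset of the nonnegative reals, the well-quasi-ordering argument would collapse, and no finite $\cF$ need exist; this fits the author's remark that discreteness is used exactly once in the paper. I therefore expect no significant further obstacle, only minor bookkeeping with the $\infty$-value conventions when some coordinate $b_j$ vanishes.
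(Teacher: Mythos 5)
Your proof is correct, but it takes a genuinely different route from the paper's. The paper proves the lemma by induction on $n$: it expands $G = \sum_{i\ge 0} g_i(x_1,\dots,x_n)\,x_{n+1}^i$, invokes Tate's theorem that $R\langle x_1,\dots,x_n\rangle$ is Noetherian to extract a finite generating set $g_0,\dots,g_N$ of the coefficient ideal, applies the inductive hypothesis to each of these, and then uses an $R\langle x\rangle$-linear expression $g_j = \sum_{i\le N} h_i g_i$ together with the ultrametric inequality to reduce an arbitrary monomial to one indexed by the assembled $\cF$. You instead pass directly to the combinatorics of the ``extended support'' $T = \{(v(g_I),I) : g_I \neq 0\} \subseteq \NN^{n+1}$, observe that maximizing $|g_I b^I|$ is minimizing a coordinatewise-monotone linear form on $T$, and apply Dickson's lemma to get finitely many minimal elements, which then dominate the attained minimizer $J^*$. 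The two arguments are cousins (Dickson's lemma is essentially the monomial-ideal case of the Hilbert basis theorem over $\ZZ$), but yours avoids the recursion, avoids quoting the Noetherianity of the Tate algebra over a DVR, and makes the role of the discreteness hypothesis transparent: it forces $v(g_I)\in\NN$, which is exactly what lets $T$ sit inside $\NN^{n+1}$ where Dickson applies. Both proofs use discreteness at essentially the same conceptual point, and both silently assume $G\neq 0$ so that $\cF$ is nonempty; that degenerate case is trivially patched. One small caution on your closing remark: while it is clear that the well-quasi-ordering argument collapses in the densely valued case, you have not actually produced a counterexample to the statement there, and it is not obvious that one exists (the constraints $v(g_I)\ge 0$, $e_j\ge 0$, and $v(g_I)\to\infty$ are restrictive); so I would soften ``no finite $\cF$ need exist'' to ``this proof no longer applies.''
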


\begin{proof}
We work by induction on $n$ with the case of $n = 0$ being trivial.  In the case of $n+1$, write $G = \sum_{i=0}^\infty g_i(x_1,\ldots,x_n) x_{n+1}^i$.  By Noetherianity of $R \langle x_1, \ldots, x_n \rangle$, the ideal $(\{g_i:i \in \NN \})$ is generated by
$\{ g_i : i \leq N \}$ for some natural number $N$.  Let $\cF_0, \ldots, \cF_N$ be the
finite sets of multi-indices given by induction for $g_i$ with $i \leq N$.  Let $\cF := \{ Ii : I \in \cF_i, i \leq N \}$.  Let us check that this choice of $\cF$ works.  Let $b = (b_1,\ldots,b_{n+1}) \in \BB_{n+1}(K,1)$.  Let $J \in \NN^{n+1}$ and write $J = J'j$ where $J' \in \NN^n$ and $j \in \NN$.   We may express $g_j$ as an $R \langle x_1, \ldots, x_n \rangle$-linear combination of $g_0, \ldots, g_N$.  The monomial $g_J x^J$ is the product of a monomial in $g_j$ with $x_{n+1}^j$.  Hence, from the expression of $g_j$ we recover an expression $g_J x^J = \sum_{i=0}^N h_i g_{J_{i}i} x^{J_{i}i}$ for appropriate choices of multi-indices $J_i$ and $h_i \in R \langle x_1, \ldots, x_n \rangle$. By the ultrametric inequality and the fact that that $|h_i(b_1,\ldots,b_n)| \leq 1$, we have $|g_J b^J| \leq |g_{J_{i}i} b^{J_{i}i}|$ for some $i \leq N$.  By the definitions of $\cF_i$ and $\cF$, we find some $I' \in \cF_i$  (and hence $I = I'i \in \cF$) such that $|g_J b^J| \leq |g_{J_{i}i} b^{J_{i}i}| \leq |g_{I'i} b^{I'i}| = |g_I b^I|$.
\end{proof}

In the nonsuperattracting case, we know enough already to finish the proof.

For the superattracting case we need the Mann property isolated in~\cite{vdDG}.

\begin{lemma}
\label{mannthm}
Let $\Gamma \leq \CC^\times$ be a finitely generated subgroup of the multiplicative group of the complex numbers.  If $L(x) = \sum_{i=1}^n c_i x_i \in \CC[x_1,\ldots,x_n]$ is a nonzero linear polynomial and $c \in \CC$ is any complex number, then the set
$\{ (\gamma_1,\ldots,\gamma_n) \in \Gamma^n : L(\gamma_1,\ldots,\gamma_n) = c \}$ is a finite union of sets defined by equations of the form $x_i = \gamma$ and $x_j = \delta x_k$ for $\gamma$ and $\delta$ in $\Gamma$.
\end{lemma}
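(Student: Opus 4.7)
The plan is to argue by induction on $n$ (uniformly in $c$), using the Mann / Evertse--Schlickewei--Schmidt theorem as a black box: for any $c' \ne 0$ and any nonzero $a_1, \ldots, a_n$, the number of \emph{non-degenerate} solutions in $\Gamma^n$ of $\sum a_i x_i = c'$ --- those with no proper nonempty subsum $\sum_{i \in S} a_i \gamma_i$ vanishing --- is finite. The base case $n=1$ is immediate, since $c_1 x_1 = c$ has either the unique solution $x_1 = c/c_1$ (when $c/c_1 \in \Gamma$) or none.

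For the inductive step, split the solutions $(\gamma_1,\ldots,\gamma_n) \in \Gamma^n$ of $\sum c_i x_i = c$ into non-degenerate and degenerate ones. When $c \ne 0$, Mann's theorem gives finitely many non-degenerate tuples, each a singleton described by the equations $x_i = \gamma_i$. When $c = 0$, I would dehomogenize by fixing an index $j_0$ and substituting $y_i := x_i/x_{j_0}$, turning the equation into $\sum_{i \ne j_0} c_i y_i = -c_{j_0}$; Mann's theorem then yields finitely many non-degenerate tuples $(\delta_i)_{i \ne j_0} \in \Gamma^{n-1}$, each corresponding to a one-parameter family $\{x_i = \delta_i x_{j_0}\}$ with $x_{j_0}$ ranging freely over $\Gamma$ --- precisely the allowed form.

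For degenerate solutions, I would stratify by a choice of proper nonempty $S \subseteq \{1,\ldots,n\}$ with $\sum_{i\in S} c_i \gamma_i = 0$, writing the degenerate solution set as the finite union over $S$ of the loci $V_S$ so defined. Each $V_S$ factors over the partition $\{1,\ldots,n\} = S \sqcup T$: on the $S$-coordinates we impose $\sum_{i\in S} c_i x_i = 0$, and on the $T$-coordinates we impose $\sum_{i \in T} c_i x_i = c$. Both restrictions are smaller-$n$ instances of the lemma, so by induction each is a finite union of sets of the allowed form. Taking products (over disjoint coordinate sets) and then the finite union over $S$ preserves the allowed form and exhibits the full solution set as a finite union of sets defined by $x_i = \gamma$ and $x_j = \delta x_k$.

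The main obstacle is the clean handling of the homogeneous ($c=0$) case via dehomogenization, together with the verification that the finite union of loci produced by the partition argument really does exhaust the degenerate solutions --- both are straightforward once one commits to the bookkeeping, and one checks that the ratio constants $\delta_i$ produced automatically lie in $\Gamma$ as ratios of elements of $\Gamma$. All the substantive content lies in the Mann property, which is black-boxed from \cite{vdDG}.
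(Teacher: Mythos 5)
Your proof is correct and follows essentially the same route as the paper: induct on $n$, invoke the Mann property from~\cite{vdDG} as a black box to control nondegenerate solutions, and stratify the degenerate solutions by the vanishing proper subsum $S$, factoring each stratum as a product over $S \sqcup S^c$ and applying the inductive hypothesis to both factors. The one small stylistic difference is that you handle the homogeneous case $c = 0$ by dehomogenizing via $y_i := x_i/x_{j_0}$ and invoking Mann on the resulting $(n-1)$-variable inhomogeneous equation, whereas the paper directly observes that the nondegenerate homogeneous solutions form finitely many $\Gamma$-scaling orbits $\{(\delta\gamma_1,\ldots,\delta\gamma_n) : \delta\in\Gamma\}$; these are the same observation in two guises, and yours is arguably the cleaner bookkeeping.
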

\begin{proof}
We work by induction on $n$ with $n = 0$ being trivial.
The main theorem of~\cite{vdDG}  asserts that $\Gamma$ has the \emph{Mann property}: there only finitely many nondegenerate solutions to the above equation where by \emph{degenerate} we mean that $\sum_{i \in I} c_i x_i = 0$ for some $I \subsetneq \{ 1, \ldots, n \}$.

Note that the Mann property implies immediate that for a homogeneous equation, that is, when $c=0$, that the solutions in $\Gamma^n$ are either degenerate or fall into one of finitely many sets of the form $\{ (\delta \cdot \gamma_1, \ldots, \delta \gamma_n) : \delta  \in \Gamma \}$.  This last set is defined by the equations $x_j = \gamma_j \gamma_1^{-1} x_1$ ($1 < j \leq n$).

In the following equation, for $I \subseteq \{ 1, \ldots, n \}$ we write $I' := \{1, \ldots, n \} \smallsetminus I$ for the complement of $I$ in $\{1, \ldots, n \}$.

We have
\begin{eqnarray*}
 \{ (\gamma_1,\ldots,\gamma_n) \in \Gamma^n : L(\gamma_1,\ldots,\gamma_n) = c \}
 &= & \text{ finite set of nondegenerate solutions } \cup \\
 && \bigcup_{I \subsetneq \{ 1, \ldots, n \}} \{ (\gamma_1, \ldots, \gamma_n) \in \Gamma^n : \sum_{i \in I} c_i \gamma_i = 0  \\ && \text{ and } \sum_{i \in I'} c_i \gamma_i = c \}
\end{eqnarray*}

By induction, each of the sets $\{ (\gamma_j)_{j \in I} \in \Gamma^I : \sum_{j \in I} c_j \gamma_j = 0 \}$ and $\{ (\gamma_j)_{j \in I'} \in \Gamma^{I'} : \sum_{j \in I} c_j \gamma_j = c \}$ have the desired form, and, therefore, so does there product.
\end{proof}

\noindent
{\bf Proof of Theorem~\ref{mainthm}}:
We work by induction on $n$ where the case of $n = 0$ is trivial.  Suppose now that
$X \subseteq \BB_{n+1}(K,|\lambda|)$ is an irreducible analytic variety containing a Zariski dense set of points from $\cO_f(a)^{n+1}$.  Unless $X = \BB_{n+1}(K,|\lambda|)$, which is already a deformed torus, we can find a nontrivial $G \in R \langle x_1, \ldots, x_{n+1} \rangle$ vanishing on $X$.  Write $G = \sum g_I x^I$ and let $\cF$ be the finite set of multi-indices given by Lemma~\ref{minval}.  It follows from the ultrametric inequality that if $b \in \BB_{n+1}(K,1)$ and $G(b) = 0$, then there is some pair $I \neq J \in \cF$ with $|g_I b^I| = |g_J b^J| \neq 0$.  Written additively, taking $b = (f^{t_1}(a),\ldots,f^{t_{n+1}}(a))$, using Lemma~\ref{iterval} we would have in the non-superattracting case

\begin{equation}
\label{nonsaeq}
\sum_{i=1}^{n+1} v(\lambda) (I_i - J_i) t_i = v(g_J) - v(g_I) + (|J| - |I|) v(a)
\end{equation}

and in the superattracting case we have
\begin{equation}
\label{saeq}
\sum_{i=1}^{n+1} (I_i - J_i) M^{t_i} [v(a) + \frac{v(\lambda)}{M -1}] =  v(g_J) - v(g_I) + (|J| - |I|) \frac{v(\lambda)}{1 - M}
\end{equation}

In case $M = 1$ and presuming that there are any points from $\cO_f(a)^{n+1}$ satisfying the equation $G(b) = 0$ and having corresponding valuations satisfying Equation~\ref{nonsaeq}, there is a deformed torus [or really, a finite union of such as we have insisted that deformed tori be irreducible and in Equation~\ref{nonsaeq} there might be a common divisor of $\{ I_i - J_i : i \leq n+1 \}$] $Y$ such that $Y \cap \cO_f(a)^{n+1}$ consists exactly of the points $(f^{t_1}(a),\ldots,f^{t_{n+1}}(a))$ for the solutions $(t_1,\ldots,t_{n+1}) \in \NN^{n+1}$ to Equation~\ref{nonsaeq}.

In case $M > 1$, then by Lemma~\ref{mannthm} applied to $\Gamma = M^\ZZ$, the set of $(n+1)$-tuples of natural numbers satisfying Equation~\ref{saeq} is a finite union of sets defined by equations of the form $t_i = A$ or $t_j = t_k + B$ for natural numbers $A$ and $B$.  By Lemma~\ref{iterint}, such a set exactly corresponds to $\cO_f(a)^{n+1} \cap Y$ for some iterational variety $Y$.

In either case, intersecting with $X$ and using the fact that $X$ is irreducible, we may assume that $X \subseteq Y$ for some such irreducible deformed torus or iterational variety (depending on whether $M = 1$ or $M > 1$).  By induction, we may assume that $X$ projects onto a Zariski dense subset of $\BB_n(K,|\lambda|)$ for each co-ordinate projection.  Since $Y$ admits a finite projection, it follows that $X = Y$. \hspace{\fill} $\Box$

\begin{Remark}
The methods employed here give useful information in the case that $f:X \to X$ is an analytic self-map where $X$ has higher dimension greater and one considers dynamical orbits near an attracting fixed point, but the results are less definitive.  Unlike the case considered in this paper where $\dim X = 1$, it need not be the case that every relation allowed by the equations on the valuations is actually realized by an analytic variety.  We shall return to these issues in a sequel.   
\end{Remark}

%

\end{document}